\documentclass[pdflatex,sn-mathphys-num]{sn-jnl}

\usepackage{graphicx}%
\usepackage{multirow}%
\usepackage{amsmath,amssymb,amsfonts}%
\usepackage{amsthm}%
\usepackage{mathrsfs}%
\usepackage[title]{appendix}%
\usepackage{xcolor}%
\usepackage{textcomp}%
\usepackage{manyfoot}%
\usepackage{booktabs}%
\usepackage{algorithm}%
\usepackage{algorithmicx}%
\usepackage{algpseudocode}%
\usepackage{listings}%
\usepackage{caption}

\theoremstyle{thmstyleone}%
\newtheorem{theorem}{Theorem}
\theoremstyle{thmstyletwo}%
\newtheorem{example}{Example}%
\newtheorem{remark}{Remark}%

\theoremstyle{thmstylethree}%
\newtheorem{definition}{Definition}%

\begin{document}

\title[  ]{A particular solution of a higher-order non-homogeneous Cauchy-Euler equation}


\author*[1]{\fnm{Miloud} \sur{Assal}}\email{miloud.assal@fst.rnu.tn}
\equalcont{These authors contributed equally to this work.}
\author[2,3]{\fnm{Skander} \sur{Belhaj}}\email{skander.belhaj@isamm.uma.tn}

\equalcont{These authors contributed equally to this work.}

\affil[1]{\orgdiv{IPEIN}, \orgname{University of Carthage},  \city{Nabeul}, \postcode{ 8000},  \country{Tunisia}}
\affil[1]{\orgdiv{LR18ES45}, \orgname{University of Carthage},  \city{Nabeul}, \postcode{ 8000},  \country{Tunisia}}

\affil[2]{\orgdiv{ISAMM}, \orgname{University of Manouba},  \city{Manouba}, \postcode{2010},  \country{Tunisia}}
\affil[3]{\orgdiv{LAMSIN}, \orgname{University of Tunis El Manar},  \city{Tunis}, \postcode{1002},  \country{Tunisia}}


\abstract{ In this paper   we introduce a new concept of atoms on discrete sets to develop an advanced method to find a particular solution for higher-order non-homogeneous Cauchy-Euler equations.  The proposed method provides also an   approximate  solution by using   approximate roots for the characteristic polynomial of the Cauchy-Euler equation.}

\keywords{Cauchy-Euler Equations, Particular Solution, Approximate Solution}



\maketitle

\section{ Introduction}
 The Cauchy-Euler equation known historically as Euler's equation is stated as follows
 \begin{equation}\label{CEE}
  {\sum_{i=0}^n} a_ix^i   y^{(i)}(x) =g(x)
 \end{equation}
where  $g$   is a suitable function, $y^{(i)}$ stands for the $i^{th}$ derivative of the unknown scalar
 function $y(x)$ and  $ a_i, i=1,\ldots,n$ are constant complex numbers.\\
 These differential equations often appear in  algorithms, notably in the analysis of
Quicksort   and engineering applications, among others  when  we solve Laplace's
equation in polar coordinates.  Various approaches have been suggested to solve (\ref{CEE}), see for
 example, \cite{LC} and \cite{HL}. In 2020,  Sangsuwan et al. \cite{24}  proposed the generalized solutions of a certain $n$th order Cauchy-Euler equation by using  a Laplace transform method.\\
Likewise, several papers have appeared in  this field of research,   including   \cite{12} and \cite{10}. Recently Assal and Belhaj \cite{MS} introduced new mathematical tools  to solve a discrete variant  of higher-order non-homogeneous Cauchy-Euler equation.\\
In this work we introduce a new concept of atoms over finite sets of real numbers and we prove new combinatory properties  of  finite sets that we apply further to give an explicit particular solution of a non-homogeneous Cauchy-Euler equation.\\
 {Classical methods for Cauchy–Euler equations typically reduce the problem to constant coefficients via the change of variables $x=e^u$, followed by standard techniques such as undetermined coefficients or variation of parameters. While  our approach rely on supplementary transformations and tend to be more effective for higher-order equations. The proposed atom method works directly in the original variable and provides a systematic alternative that complements classical techniques, especially for more complicated  cases.}\\
This paper is organized as follows;    First  we introduce  new tools to solve equation (\ref{CEE}) where we consider the concept of atoms on finite sets by means of  \cite{MW} which we present in the second section. The third section of this manuscript     covers the main result together with its proof  which is based on atoms properties followed by some   examples.  Section 4 is devoted to  present some  numerical results that illustrate our approach. Finally,  in section 5  we summarize our work with some comments.
\section{Atoms of Discrete Sets}
\begin{definition}  Let    $X=\{x_1, x_2,\ldots, x_n\}$ be any finite set of distinct real numbers. A real-valued function $A$     is called an $X$-atom if it satisfies the following two conditions
\begin{enumerate}
  \item [(i)] $M(A,s):=   \displaystyle \sum_{i=1}^n x_i^s A(x_i) =0$   for $0\leq s\leq n-2$,
  \item [(ii)]$M(A,n-1):=   \displaystyle \sum_{i=1}^n x_i^{n-1} A(x_i) =1$.
\end{enumerate}
  \end{definition}

\begin{remark}
We mention that  the concept of atoms given here is close to that given in \cite{MW}. Here, (i)   is called the cancellation moment condition,  while (ii)  is the size condition of an atom.
\end{remark}
In the following theorem  we introduce an example of  atoms    that will be the main tool to give an explicit particular solution for the non-homogeneous Cauchy-Euler equation.
 \begin{theorem}\label{T1}
Let  $X=\{x_1, x_2,\ldots, x_n\}$  be   any discrete subset of distinct  real numbers   and let   $\left. A : X\longrightarrow  \mathbb{R} \right.$ be the function defined by
$$A(x_i)=  {{ \prod^n_{\substack{ j=1 \\ j\neq i}}}(x_i-x_j)^{-1}}.$$  Then $A$  is an $X$-atom.

\end{theorem}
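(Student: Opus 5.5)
We have to show that $M(A,s)=\sum_{i=1}^n x_i^s\prod_{j\neq i}(x_i-x_j)^{-1}$ equals $0$ for $0\le s\le n-2$ and equals $1$ for $s=n-1$. The plan is to recognize the sum as a divided difference, that is, as the leading coefficient of a Lagrange interpolation polynomial. For a fixed $s$ with $0\le s\le n-1$, set $p(x)=x^s$. Let $L_i(x)=\prod_{j\ne i}\frac{x-x_j}{x_i-x_j}$ be the Lagrange basis polynomials attached to the distinct nodes $x_1,\dots,x_n$.

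\emph{Step 1.} Because $\deg p\le n-1$ and the nodes are distinct, uniqueness of interpolation gives the identity
$$x^s=\sum_{i=1}^n x_i^s L_i(x).$$
Uniqueness holds because the difference of the two sides has degree at most $n-1$ and vanishes at $n$ distinct points, so it is identically zero.

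\emph{Step 2.} Compare the coefficients of $x^{n-1}$ on both sides. Each $L_i$ has degree exactly $n-1$, and its leading coefficient is $\prod_{j\neq i}(x_i-x_j)^{-1}=A(x_i)$. So the coefficient of $x^{n-1}$ on the right-hand side is exactly $M(A,s)$. On the left-hand side this coefficient is $0$ when $s\le n-2$ and $1$ when $s=n-1$. These are conditions (i) and (ii) of the definition.

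An alternative route, which I would keep in reserve, uses the partial fraction decomposition of $x^s/\prod_j(x-x_j)$. The residues of this rational function are $x_i^sA(x_i)$, and their sum equals the coefficient of $1/x$ in its expansion at infinity, which is $0$ or $1$ as required. A third option is to read $M(A,s)$ as a Vandermonde-type determinant quotient via Cramer's rule.

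The only point needing care is the degenerate case $n=1$. There the product is empty, so $A(x_1)=1$, condition (i) is vacuous, and (ii) reads $x_1^0\cdot 1=1$. This case is also covered by Step 2, since $L_1\equiv1$. Beyond that, I expect no real obstacle: the main step is simply the leading-coefficient comparison in Step 2.
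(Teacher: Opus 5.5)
Your proof is correct and follows essentially the same route as the paper. The paper also writes $x^k=\sum_{i}x_i^k\ell_i(x)$ by Lagrange interpolation, identifies $A(x_i)=1/\omega'(x_i)$ as the leading coefficient of $\ell_i$, and compares the coefficients of $x^{n-1}$ for $k\le n-2$ and for $k=n-1$. Your explicit justification of uniqueness of interpolation and your check of the case $n=1$ fill in details the paper leaves implicit.
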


\begin{proof}
  {To prove Theorem \ref{T1} we introduce the polynomial} $
\omega(x)=\prod_{j=1}^n (x-x_j),$ which is of degree $n$. A direct computation shows that
$
\omega'(x_i)=\prod_{\substack{j=1\\ j\neq i}}^{n}(x_i-x_j),
$
hence
\[
 \label{eq:Aomega}
A(x_i)=\frac{1}{\omega'(x_i)}.
 \]
The Lagrange interpolation basis associated with the nodes
$x_1,\dots,x_n$ is given by
\[
\ell_i(x)=\prod_{\substack{j=1\\ j\neq i}}^{n}
\frac{x-x_j}{x_i-x_j}
=\frac{\omega(x)}{(x-x_i)\omega'(x_i)},
\qquad i=1,\dots,n.
\]
Each polynomial $\ell_i$ has degree $n-1$ and satisfies

\[
 {\ell_i(x)
= A(x_i)\,x^{n-1} + \text{terms of degree }\le n-2.}
\]
Thus, $A(x_i)$ is precisely the leading coefficient of $\ell_i(x)$.
Moreover, for any polynomial $P$ of degree at most $n-1$, the Lagrange interpolation
formula reads
\[
P(x)=\sum_{i=1}^n P(x_i)\,\ell_i(x).
\]

Let $k\in\{0,1,\dots,n-2\}$ and set $P(x)=x^k$.
Then
\[
x^k=\sum_{i=1}^n x_i^k\,\ell_i(x).
\]
Since $\deg P\le n-2$, the coefficient of $x^{n-1}$ in the left-hand side is
zero. Taking the coefficient of $x^{n-1}$ in the right-hand side yields
\[
\sum_{i=1}^n x_i^k\,A(x_i)=0,
\qquad k=0,1,\dots,n-2.
\]
Now consider $P(x)=x^{n-1}$. By interpolation,
\[
x^{n-1}=\sum_{i=1}^n x_i^{n-1}\,\ell_i(x).
\]
Comparing the coefficients of $x^{n-1}$ on both sides gives
\[
 {\sum_{i=1}^n x_i^{n-1}\,A(x_i)=1.}
\]

\end{proof}

\section{Particular Solution   using Atomic Roots}
  A general solution for the non-homogeneous equation of (\ref{CEE}) has   been extensively studied in the literature, so it may be solved
   by the substitution $ x=e^{u}, x>0$, while for  $x<0$, one might replace $x$ by $|x|$. This substitution  may be used to provide a  reduced  linear   equation with constant coefficients.
 \begin{definition}
 \begin{equation}\label{CP} \varphi(r)= {\sum_{i=0}^n} a_ir^{\underline{i}} \end{equation}
is called the characteristic polynomial  of the Cauchy-Euler equation  (\ref{CEE})  where,
 $$r^{\underline{i}}=r(r-1)(r-2)\cdots(r-i+1)    \mbox{ with   }  r^{\underline{0}}=1. $$
\end{definition}
\begin{definition}

For a real number $r$  and for a    function      {\(g \in C^0((0,\infty))\) and \\ \(t^{-r-1}g(t) \in L^1_{\text{loc}}((0,\infty)\))} we define   the operator $I_r(g)$  by
$$I_r(g)(x)=\int_0^x t^{-r }g(t)dt. $$
\end{definition}
\begin{theorem}\label{T2}
If $\varphi$ has $n$ distinct roots $r_1, r_2,\ldots,r_n$, then  a particular solution of (\ref{CEE}) is given by
$$y_p(x)= \sum_{i=1}^nA(r_i)x^{r_i}I_{r_i+1 }(g)(x)$$
where $A$ is the $X$-atom with  $X=\{r_1, r_2,\ldots,r_n\}$  and  {\(g \in C^0((0,\infty))\) and \\ \(t^{-r-1}g(t) \in L^1_{\text{loc}}((0,\infty))\)} for all $\left.r_i, \, i=1,2,\ldots,n.\right.$
  \end{theorem}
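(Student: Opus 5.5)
The plan is a direct computation. For each root $r_i$ set $J_i(x)=I_{r_i+1}(g)(x)=\int_0^x t^{-r_i-1}g(t)\,dt$. Because $g$ is continuous on $(0,\infty)$ and $t^{-r_i-1}g(t)$ is integrable near $0$, $J_i$ is well defined. By the fundamental theorem of calculus it is $C^1$ on $(0,\infty)$ with $J_i'(x)=x^{-r_i-1}g(x)$. We then have $y_p(x)=\sum_{i=1}^n A(r_i)\,x^{r_i}J_i(x)$. Each derivative of $y_p$ produces two kinds of terms: one from differentiating $x^{r_i}$ and one from differentiating $J_i$. The moment conditions of the atom in Theorem~\ref{T1} should cancel all terms of the second kind except the last.

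The key step is the following claim, proved by induction on $k$. For $0\le k\le n$,
\[
y_p^{(k)}(x)=\sum_{i=1}^n A(r_i)\,r_i^{\underline{k}}\,x^{r_i-k}J_i(x)+\delta_{k,n}\,x^{-n}g(x).
\]
The case $k=0$ is the definition of $y_p$. For the inductive step with $k\le n-1$, differentiate the sum termwise. The factor $x^{r_i-k}$ gives $r_i^{\underline{k}}(r_i-k)=r_i^{\underline{k+1}}$ together with $x^{r_i-k-1}$. The factor $J_i$ gives
\[
\sum_i A(r_i)\,r_i^{\underline{k}}\,x^{r_i-k}x^{-r_i-1}g(x)=x^{-k-1}g(x)\sum_{i=1}^n A(r_i)\,r_i^{\underline{k}} .
\]
Since $r^{\underline{k}}$ is a monic polynomial of degree $k$ in $r$, expanding it in powers $r^s$ and applying Theorem~\ref{T1} to $X=\{r_1,\dots,r_n\}$ makes this sum vanish for $k\le n-2$, by the cancellation conditions. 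For $k=n-1$ only the leading coefficient survives, and the size condition gives the value $1$. This proves the claim. A point to flag: when $x^{r_i}$ and the atom are complex (complex roots of $\varphi$), the same moment identities still hold. Theorem~\ref{T1} is purely algebraic, via Lagrange interpolation, so I would remark that it extends verbatim to distinct complex nodes.

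Plugging the claim into the left side of (\ref{CEE}) and using $x^k\cdot x^{r_i-k}=x^{r_i}$ gives
\[
\sum_{k=0}^n a_k x^k y_p^{(k)}=\sum_{i=1}^n A(r_i)\,\varphi(r_i)\,x^{r_i}J_i(x)+a_n g(x)=a_n g(x),
\]
because $\varphi(r_i)=0$. So $y_p$ solves the equation exactly when $a_n=1$. In general $y_p/a_n$ is a particular solution, since the statement implicitly takes the equation normalized so that $\varphi$ is monic. I will state this normalization explicitly.

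I expect the main obstacle to be justifying regularity rather than the algebra. Only $J_i'$ is needed, never higher derivatives of $J_i$, because the moment cancellation removes every $g$-term until the last step. Hence continuity of $g$ suffices and no differentiability of $g$ is required. The point needing care is that the hypothesis must give integrability of $t^{-r_i-1}g$ at $0$, not merely on compact subsets of $(0,\infty)$, so that the lower limit $0$ makes sense. If that fails, I would replace $0$ by any fixed base point $x_0>0$. The computation above is unchanged, since it uses only $J_i'$.
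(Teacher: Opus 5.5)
Your proof is correct and follows the paper's own argument: compute $x^k y_p^{(k)}$, use the cancellation and size conditions of the atom so that the only surviving $g$-term is $g(x)$ at order $n$, then substitute into the equation and use $\varphi(r_i)=0$. Your induction makes explicit the step-by-step cancellation that the paper compresses into the constants $\alpha_{j,k}$, and your remarks on the implicit normalization $a_n=1$, on complex roots, and on integrability at $0$ correctly identify assumptions the paper leaves unstated.
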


\begin{proof}
Computing the derivatives of order $k$, $0\leq k \leq n$ for $y_p$, and taking into account that $M(A,j)=0$ for  all $0\leq j\leq n-2 $  and $M(A,n-1)=1$, we obtain \\\\
$ \begin{array}{ccl}a_1xy_p^{(1)}(x)   & =  &a_1 M(A,0)g(x)   +  a_1\sum_{i=1}^nA(r_i)r_ix^{r_i }I_{r_i+1 }(g)(x)\\\\
 &=&a_1\sum_{i=1}^nA(r_i)r_ix^{r_i }I_{r_i+1 }(g)(x)
 \end{array}
 $  \\\\
and for all  $2\leq k \leq n-1$, we have for some constants $\alpha_{j,k}$ \\\\
$ \begin{array}{ccl}
& &  \qquad a_k x^k y_p^{(k)}(x) =\\\\
       &= & a_k\sum_{i=1}^nA(r_i)\prod^{k-2}_{s=0}(r_i-s )g(x)   +  a_k\sum_{i=1}^nA(r_i)x^{r_i } \prod^{k-1}_{s=0}(r_i-s )I_{r_i+1 }(g)(x)\\\\
        &= & a_k\sum_{j=1}^{k-1}\alpha_{j,k}M(A,j-1) g(x)   +  a_k\sum_{i=1}^nA(r_i) {r_i }^{\underline{k} }x^{r_i } I_{r_i+1 }(g)(x)\\\\
       &= &   a_k\sum_{i=1}^nA(r_i){r_i }^{\underline{k} }x^{r_i }  I_{r_i+1 }(g)(x).
  \end{array}
 $\\
\qquad For $k=n$, we obtain
$$ \begin{array}{ccl}
x^ny_p^{(n)}(x)  \hspace{-0.1cm}&\hspace{-0.1cm}= &\hspace{-0.1cm} M(A,n-1)g(x)+ \sum_{j=1}^{n-1}\alpha_{j,n}M(A,j-1)g(x) \\
 &+&   \sum_{i=1}^nA(r_i){r_i }^{\underline{n} }x^{r_i }  I_{r_i+1 }(g)(x) \\\\
&=& g(x)    +   \sum_{i=1}^nA(r_i)x^{r_i } {r_i }^{\underline{n} }I_{r_i+1 }(g)(x).
\end{array}
$$
Now we  substitute  the above derivatives in equation (\ref{CEE})  to conclude
$$\begin{array}{ccl}
\sum_{i=0}^na_ix^iy^{(i)}&=&  g(x)+  \sum_{i=0}^n A(r_i)x^{r_i}[{r_i}^{\underline{n}}+a_{n-1}{r_i}^{\underline{n\!-\!1}} +..+a_{1}{r_i}^{\underline{1}} +a_0]I_{r_{i+1 }}(g)    \\\\
 &=&  g(x)+  \sum_{i=0}^n A(r_i)x^{r_i}\underbrace{ [\varphi(r_i)]}_{=0}I_{r_{i+1 }}(g)   \\
&=& g(x)
\end{array}
$$
where $\varphi$ is the characteristic polynomial (\ref{CP}) of  equation  (\ref{CEE}).
\end{proof}

Now, we present an application of the main result to different forms of Cauchy-Euler equations.
In the following  examples, we give an exact particular solution for each.

\begin{example}  Consider the equation
\begin{equation}\label{g=poly}
\sum_{i=1}^na_ix^iy^{(i)}(x)=\sum_{j=m}^{N}b_jx^j.
\end{equation}
where  $N\in \mathbb{N}$    with $m=   \max(0, \max([r_i]+1),\,  {0\leq i\leq n} ) $, $(b_j)_{j=m,\ldots,N}$ is a sequence of real numbers   and       $X=\{r_1,\ldots,r_n\} $  is the set of the distinct roots of the  characteristic equation of  (\ref{g=poly}), then
$$y_p(x)=\sum_{i=1}^n\sum_{j=m}^{N} {b_jA(r_i)\over j-r_i}x^{j}.$$
\end{example}

\begin{example} Consider now the non-homogeneous Cauchy-Euler equation\\
 \small $$x^8y^{(8)}+28x^7y^{(7)}+{1007\over 4}x^6y^{(6)}+{3345\over 4}x^5y^{(5)}+{3309\over 4}x^4y^{(4)}-{3\over 2}x^3y^{(3)}$$
 $$+{9\over 2}x^2y^{(2)}-9xy^{(1)}+9y =x^4\ln(x).$$
We can express  the characteristic polynomial as $$\left.\varphi(r)=(r+1)(r-1)(r+2)(r-2)(r+3)(r-3)(r+{1\over 2} )(r-{1\over 2}).\right. $$
We compute  $I_{r_i+1}(x^4\ln(x))$, for $r_i, i=1...8$  we get
\begin{itemize}

 \item $\displaystyle
r_1 = -1,
I_{r_1+1} = \int_0^x t^4 \log(t)\,dt
= \frac{x^5}{5}\left(\log(x)-\frac{1}{5}\right)
 $

\item $\displaystyle
r_2 = 1, \quad
I_{r_2+1} = \int_0^x t^2 \log(t)\,dt
= \frac{x^3}{3}\left(\log(x)-\frac{1}{3}\right)
 $

\item $\displaystyle
r_3 = -2,\quad
I_{r_3+1} = \int_0^x t^5 \log(t)\,dt
= \frac{x^6}{6}\left(\log(x)-\frac{1}{6}\right)
 $
\item $\displaystyle
r_4 = 2, \quad
I_{r_4+1} = \int_0^x t \log(t)\,dt
= \frac{x^2}{2}\left(\log(x)-\frac{1}{2}\right)
 $

\item $\displaystyle
r_5 = -3,\quad
I_{r_5+1} = \int_0^x t^6 \log(t)\,dt
= \frac{x^7}{7}\left(\log(x)-\frac{1}{7}\right)
 $

\item $\displaystyle
r_6 = 3, \quad
I_{r_6+1} = \int_0^x \log(t)\,dt
= x\left(\log(x)-1\right)
$
\item $\displaystyle
r_7 = -\tfrac{1}{2},  \quad
I_{r_7+1} = \int_0^x t^{7/2} \log(t)\,dt
= \frac{2x^{9/2}}{9}\left(\log(x)-\frac{2}{9}\right)
$
  \item $\displaystyle
r_8 = \tfrac{1}{2}, \quad
I_{r_8+1} = \int_0^x t^{5/2} \log(t)\,dt
= \frac{2x^{7/2}}{7}\left(\log(x)-\frac{2}{7}\right)
 $
\end{itemize}
 According to Theorem \textbf{\ref{T2}}, a particular solution for the above equation is given as follows
 $$y_p(x)= \sum_{i=1}^8A(r_i)x^{r_i}I_{r_i+1 }(x^4\ln(x)) = {1\over 19845}x^4ln(x) - {898\over 6251175}x^4.$$
 A simple computation shows that $y_p$  is a particular solution of  the given equation above.
\end{example}

 \begin{example} \label{example0}
		Consider the non-homogeneous Cauchy-Euler equation $$x^5y^{(5)}+{17\over 2}x^4y^{(4)}+{19\over 2}x^3y^{(3)}-{21\over 2}x^2y^{(2)}+3xy^{(1)}-3y=x^8\sin(x).$$
				 We can express  the characteristic polynomial as $$\varphi(r)=(r-1)(r+1)(r-2)(r+3) (r-{1\over 2}).   $$
We compute  $I_{r_i+1}(x^8\sin(x))$, for $r_i, i=1...5$  we get
\begin{itemize}
    \item $\displaystyle
r_1 = -1,\quad
I_{r_1+1}
= \int_0^x t^8 \sin(t)\,dt
= 40320- \sin(x)\bigl(-8x^7 + 336x^5 - 6720x^3 + 40320x\bigr)
- \cos(x)\bigl(x^8 - 56x^6 + 1680x^4 - 20160x^2 + 40320\bigr)
$

 \item $\displaystyle
r_2 = 1,\quad
I_{r_2+1}
 = \int_0^x t^6 \sin(t)\,dt
 = \sin(x)\bigl(6x^5 - 120x^3 + 720x\bigr)
- \cos(x)\bigl(x^6 - 30x^4 + 360x^2 - 720\bigr) - 720
$

 \item $\displaystyle
r_3 = 2,\quad
I_{r_3+1}
= \int_0^x t^5 \sin(t)\,dt
= \sin(x)\bigl(5x^4 - 60x^2 + 120\bigr)
- \cos(x)\bigl(x^5 - 20x^3 + 120x\bigr)
 $

 \item $\displaystyle
r_4 = -3,\quad
I_{r_4+1}
= \int_0^x t^{10} \sin(t)\,dt
= \sin(x)\bigl(10x^9 - 720x^7 + 30240x^5 - 604800x^3 + 3628800x\bigr)
- \cos(x)\bigl(
x^{10} - 90x^8 + 5040x^6 - 151200x^4
+ 1814400x^2 - 3628800
\bigr)
 - 3628800
$
\item $\displaystyle
r_5 = \tfrac12,\quad
I_{r_5+1}
= \int_0^x t^{13/2} \sin(t)\,dt
= \frac{135135}{64}x^{1/2}\cos(x)
- \frac{9009}{16}x^{5/2}\cos(x)
+ \frac{143}{4}x^{9/2}\cos(x)
- x^{13/2}\cos(x)
+ \frac{45045}{32}x^{3/2}\sin(x)
- \frac{1287}{8}x^{7/2}\sin(x)
+ \frac{13}{2}x^{11/2}\sin(x)
- \frac{135135\sqrt{2\pi}}{128}
\operatorname{fresnelc}\!\left(
\sqrt{\frac{2x}{\pi}}
\right)
$
\end{itemize}

According to Theorem \textbf{\ref{T2}}, a particular solution for the above equation is given as follows


\begin{center}
$ y_p(x)= \sum_{i=1}^5A(r_i)x^{r_i}I_{r_i+1 }(x^8\sin(x))=180x- {2240\over x}+ {25920 \over x^3}-10720 {cos(x)\over x} - x^{3}cos(x)+ {25920cos(x) \over x^3} + {1265\over 4}xcos(x) - 2080sin(x) + {51 \over 2}x^{2}sin(x) + 25920{sin(x)\over x^2} - {2145\over 8}\sqrt{2\pi x}     C(\sqrt{2x\over \pi}).$
\end{center}


where $C(x)=\int_0^x \cos(t^2)dt$  denotes  the  Fresnel cosine integral.\\
 A simple computation shows that $y_p$  is a particular solution of  the given equation above.
    \end{example}

\section{Approximate Solution}
It is often hard to find the exact roots of any given  polynomial; however there still exist different ways to find approximate values of these roots. In this section we prove that  approximate values of roots for the characteristic polynomial of equation (\ref{CEE}) lead to an approximate particular solution with an accurate error rate and we illustrate these results by a numerical approach.

Now  we present a brief analysis of the accuracy of $y(x)$ computed by Theorem \ref{T2}.\\
\begin{theorem}\label{Approximatethm} Let \textcolor[rgb]{1.00,0.00,0.00}{$\varphi$ be} the characteristic polynomial of Cauchy-Euler equation (\ref{CEE}) with $n$ distinct roots $r_1,\ldots,r_n$. If $r_{i,\varepsilon} =r_i+\varepsilon$ is an approximate value of $r_i$, $i=1,\ldots,n$ where $\varepsilon\in\left( -\mu;\mu\right)$ and $\mu>0$, and $|g(x)|\leq C|x|^\alpha$ for some $\alpha > max_{1\leq i\leq n}(r_i)$ then
$$y_{p,\varepsilon}(x)= \sum_{i=1}^nA(r_i)x^{r_{{i,\varepsilon}}}I_{r_{i,\varepsilon}+1 }(g)(x).$$
Moreover, $y_{p,\varepsilon}(x)$ converges uniformly on any compact $K$ of $\mathbb{R}^+$  (resp. $\mathbb{R}^-$ ) to $y_{p}(x)$ and
  $$Error_{\varepsilon}(x)=| y_p(x)-y_{p,\varepsilon}(x)|
   \leq C_K{\varepsilon \over \beta-\varepsilon}, $$
  where   $\beta=  \min_{1\leq i\leq n}(\alpha-r_i)$  and $C_K$  is a constant which depends  on $K$.
\end{theorem}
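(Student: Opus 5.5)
Writing $y_p-y_{p,\varepsilon}$ with the same atom weights $A(r_i)$, the difference splits root by root:
\[
y_p(x)-y_{p,\varepsilon}(x)=\sum_{i=1}^n A(r_i)\Bigl(x^{r_i}I_{r_i+1}(g)(x)-x^{r_i+\varepsilon}I_{r_i+\varepsilon+1}(g)(x)\Bigr).
\]
Since $A$ depends only on the exact roots through Theorem~\ref{T1}, the factors $|A(r_i)|$ are fixed constants. It therefore suffices to bound each term
\[
D_i(x)=x^{r_i}\int_0^x t^{-r_i-1}g(t)\,dt-x^{r_i+\varepsilon}\int_0^x t^{-r_i-\varepsilon-1}g(t)\,dt
\]
uniformly for $x$ in a compact $K\subset(0,\infty)$; the case $\mathbb{R}^-$ is handled by replacing $x$ with $|x|$. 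The first step is to check that all integrals converge. Using $|g(t)|\le Ct^\alpha$, the integrand is bounded by $Ct^{\alpha-r_i-\varepsilon-1}$, which is integrable at $0$ as long as $\alpha-r_i-\varepsilon>0$. This holds when $|\varepsilon|<\beta$, so I will take $\mu\le\beta$. This also shows that $y_{p,\varepsilon}$ is well defined.

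The key step is to substitute $t=xs$, which gives
\[
x^{r}\int_0^x t^{-r-1}g(t)\,dt=\int_0^1 s^{-r-1}g(xs)\,ds .
\]
The $x$-prefactor is absorbed, and
\[
D_i(x)=\int_0^1 \bigl(s^{-r_i-1}-s^{-r_i-\varepsilon-1}\bigr)g(xs)\,ds .
\]
With $|g(xs)|\le C|x|^\alpha s^\alpha$ and $R=\max_K|x|$, this yields
\[
|D_i(x)|\le C R^\alpha\int_0^1 \bigl|s^{\gamma_i-1}-s^{\gamma_i-\varepsilon-1}\bigr|\,ds,\qquad \gamma_i=\alpha-r_i\ge\beta .
\]
The integrand $s^{\gamma_i-1}-s^{\gamma_i-\varepsilon-1}$ has constant sign on $(0,1)$, because $s^{-\varepsilon}-1$ has a fixed sign there. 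The integral can therefore be computed exactly:
\[
\Bigl|\frac1{\gamma_i}-\frac1{\gamma_i-\varepsilon}\Bigr|=\frac{|\varepsilon|}{\gamma_i|\gamma_i-\varepsilon|}.
\]

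The last step is to bound $\frac{|\varepsilon|}{\gamma_i(\gamma_i-\varepsilon)}\le \frac{1}{\beta}\cdot\frac{|\varepsilon|}{\beta-|\varepsilon|}$, using $\gamma_i\ge\beta$ and $\gamma_i-\varepsilon\ge\beta-|\varepsilon|$. Summing over $i$ gives
\[
\mathrm{Error}_\varepsilon(x)\le C_K\,\frac{|\varepsilon|}{\beta-|\varepsilon|},\qquad C_K=\frac{C R^\alpha}{\beta}\sum_{i=1}^n|A(r_i)| .
\]
The right-hand side tends to $0$ as $\varepsilon\to0$ independently of $x\in K$, which gives uniform convergence. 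I read the $\varepsilon$ in the statement as $|\varepsilon|$. This reading is harmless for $\varepsilon>0$, which is the form written, and the $\varepsilon<0$ case is covered by the absolute value.

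The main obstacle is the requirement $\mu\le\beta$, which the statement leaves implicit. Without it the approximate integrals diverge at $0$. I would state it explicitly as part of the hypothesis $\varepsilon\in(-\mu,\mu)$.

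A secondary point is whether the approximate formula should use $A(r_i)$ or $A(r_{i,\varepsilon})$. With the same uniform shift $\varepsilon$ for every root, these coincide: the differences $r_{i,\varepsilon}-r_{j,\varepsilon}=r_i-r_j$ are unchanged, so Theorem~\ref{T1} gives the same atom. No extra perturbation estimate on $A$ is needed.
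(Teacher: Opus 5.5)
Your proof is correct and follows the paper's argument. The paper likewise uses $A(r_{i,\varepsilon})=A(r_i)$ to split the difference root by root, bounds each term by essentially $|\varepsilon|\,x^\alpha/\bigl((\alpha-r_i)(\alpha-r_i-\varepsilon)\bigr)$ using $|g(t)|\le Ct^\alpha$ and the exact integral of the constant-sign kernel, and absorbs the remaining factors into $C_K$; your substitution $t=xs$, the explicit requirement $\mu\le\beta$, and the use of $|\varepsilon|$ just make precise what the paper leaves under ``$\varepsilon$ small enough''. One small fix: define $C_K$ with $\max_{x\in K}x^\alpha$ instead of $R^\alpha$, since the two differ when $\alpha<0$, which can happen if all $r_i<0$.
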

\begin{proof}
Taking into account that   $A(r_{i,\varepsilon})=A(r_i)$ it follows for $x>0$  and $\varepsilon$ small enough  \\\\
  $ \begin{array}{ccl}
      | y_p(x)-y_{p,\varepsilon}(x)| & =&\displaystyle \Big{|}\sum_{i=1}^nA(r_i)x^{r_i} \Big{[}   \int_0^x{g(t)\over t^{r_i +1}}dt - x^{ \varepsilon }\int_0^x{g(t)\over t^{r_i+\varepsilon +1}}dt\Big{]}\Big{|} \\\\
      & \leq &\displaystyle \sum_{i=1}^n|A(r_i)|x^{r_i}{\varepsilon x^\alpha\over (\alpha-r_i)(\alpha-r_i-\varepsilon)} \leq \displaystyle C_K{\varepsilon \over \beta-\varepsilon}.
    \end{array}
$ \\\\
 Likewise,  we get the result for $x<0$.
 \end{proof}
\vspace{1cm}
To illustrate the results obtained in  Theorem \ref{Approximatethm}, we plot the errors in Figure \ref{fig:0} as follows: $\left.Error_{\varepsilon_{j}}(x)=\left| y_p(x)-y_{p,\varepsilon_{j}}(x)\right|\right.$ of Example \ref{example0}  with  $\varepsilon_{j}=c\times10^{-j}$ for $j=1,\ldots,5$ and $c$ is randomly taken  in $\left( -1,1\right).$
\begin{figure}[H]
\captionsetup{justification=centering}
\centering
\includegraphics[height=2in,width=4.5in]{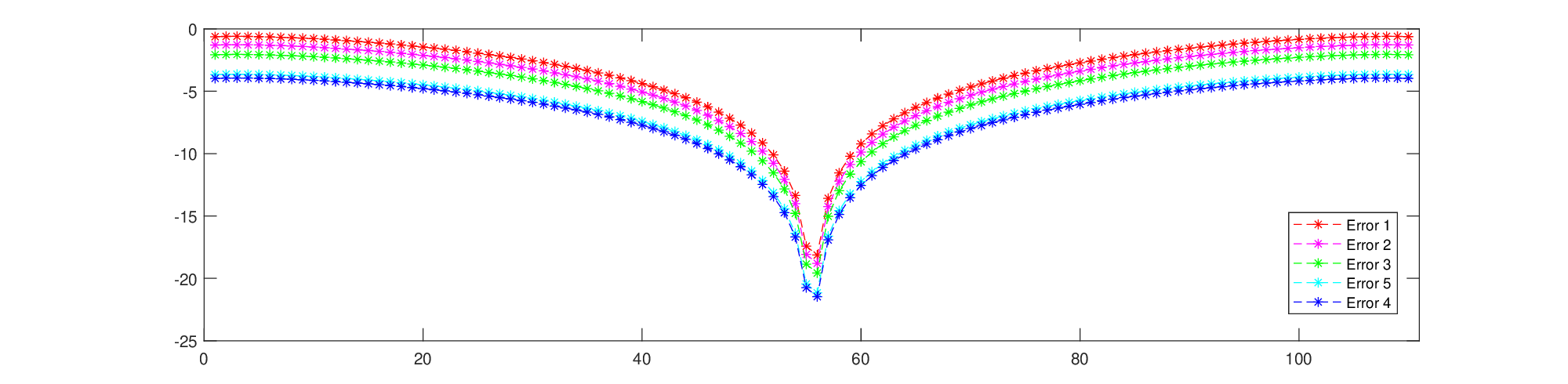}
\caption{Error 1: $Error_{\varepsilon_{1}}(x)$ (red), Error 2: $Error_{\varepsilon_{2}}(x)$ (magenta), Error 3: $Error_{\varepsilon_{3}}(x)$ (green), Error 4: $Error_{\varepsilon_{4}}(x)$ (cyan) and Error 5: $Error_{\varepsilon_{5}}(x)$ (blue) for Example \ref{example0}. }
\label{fig:0}
\end{figure}
Then, the behavior of error  shows that numerical results are very close to the theoretical one.
\subsection{Numerical results}
In what follows,  we consider a family of approximate roots to get an approximate solution for the exact one with a small error. Thus, we highlight the potential benefits of our method in terms of numerical stability. So, we are interested in applying Theorem \ref{T2} to find  numerically a particular solution for Cauchy-Euler equations of the following three tests:\\
\textbf{Test $1$. } $g(x)=x^{4}\sin x$ odd\\
\textbf{Test $2$. } $g(x)=x^{5}\sin x$ even\\
\textbf{Test $3$. } $\left.g(x)=x^{6}(\sin x+\cos x) \text{ any.}\right.$\\
Numerical tests were implemented in Matlab 9.4.0.813654 (R2018a) and run on an Intel(R) Core(TM) i7-8550U CPU @ 1.80GHz laptop with 8GB of RAM and 1.99GHz processor.
\subsection{Accuracy}
We randomly  perturb five times the roots $1,$ $-2,$ $3,$ $1/2$ of the characteristic polynomial\\ $$\left.\varphi(r)=r^4 - {5r^3\over 2} - 4r^2 + {17r\over 2} - 3\right.$$ for Tests 1-3 by adding $\varepsilon _{j}=c\times10^{-j}$ to all the roots for $j=1,\ldots,5$ and $c$ is randomly taken  in $\left( -1,1\right)$.\\
To illustrate the error analysis results established in Theorem~4.1, Figures~4.2 to~4.4 display the exact solution \( y(x) \) along with the perturbed solutions \( y_{\varepsilon_j}(x) \), for \( j = 1, \ldots, 5 \), corresponding to Tests~1 through~3. In each figure, the exact solution \( y(x) \) is plotted as a solid (continuous) blue line, while the approximate solutions \( y_{\varepsilon_1}(x) \), \( y_{\varepsilon_2}(x) \), \( y_{\varepsilon_3}(x) \), \( y_{\varepsilon_4}(x) \), and \( y_{\varepsilon_5}(x) \) are represented as dashed lines in magenta, red, green, yellow, and cyan, respectively. Specifically, Figure~4.2 corresponds to Test~1, Figure~4.3 to Test~2, and Figure~4.4 to Test~3.

\begin{figure}[H]
    \centering
  \includegraphics[scale=0.4]{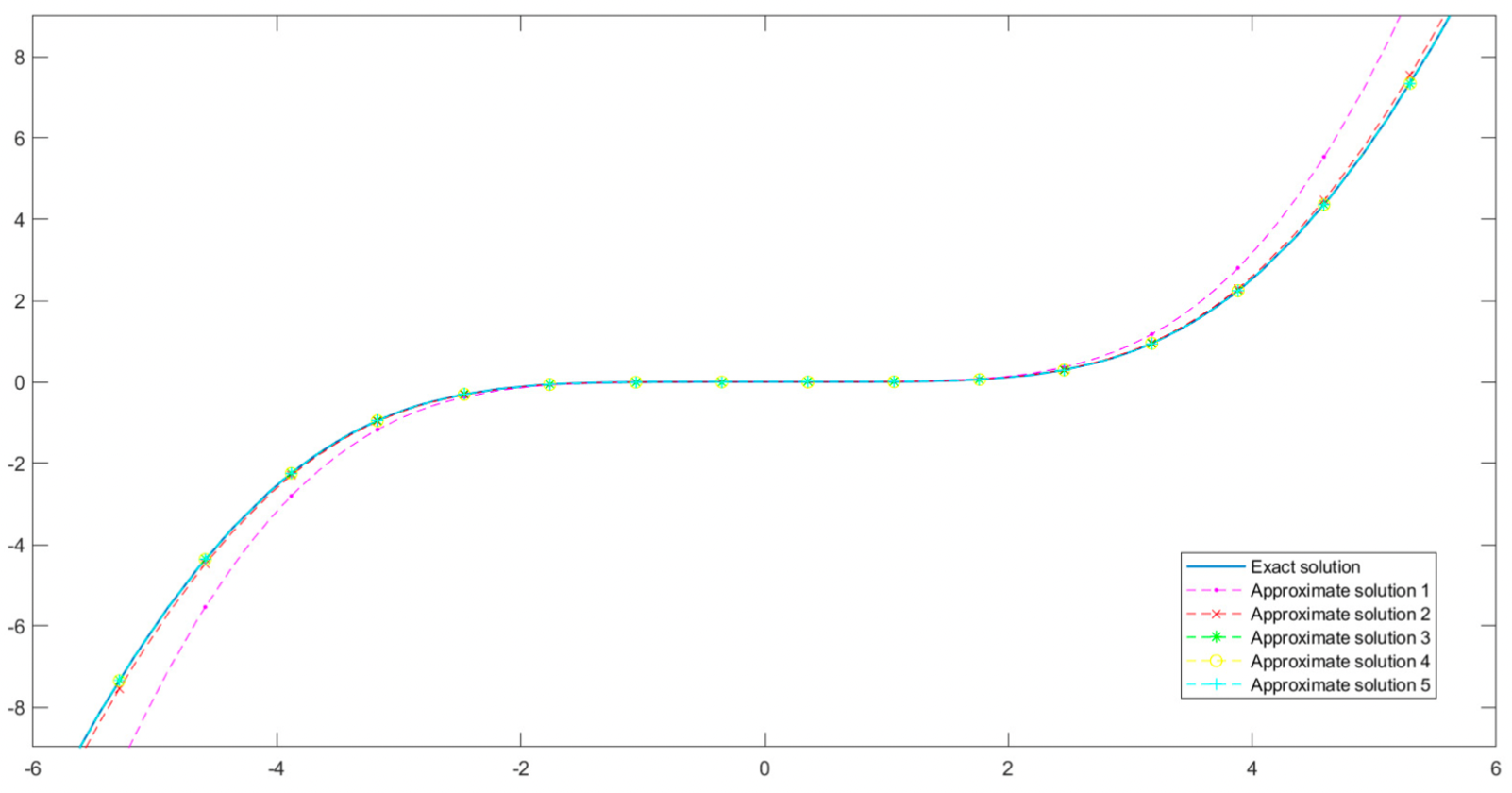}
    \caption{Test $1$. $g(x)=x^{4}\sin x$ odd}
   \label{fig:1}
    \centering
 \includegraphics[scale=0.65]{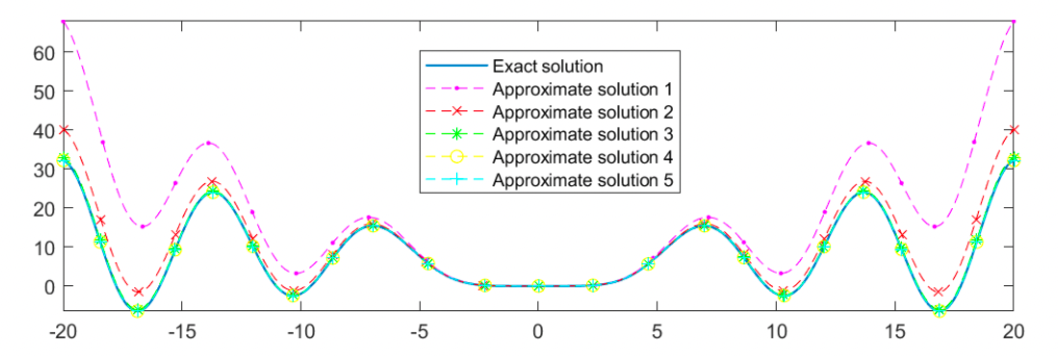}
    \caption{Test $2$. $g(x)=x^{5}\sin x$ even}
    \label{fig:2}
    \centering
 \includegraphics[scale=0.29]{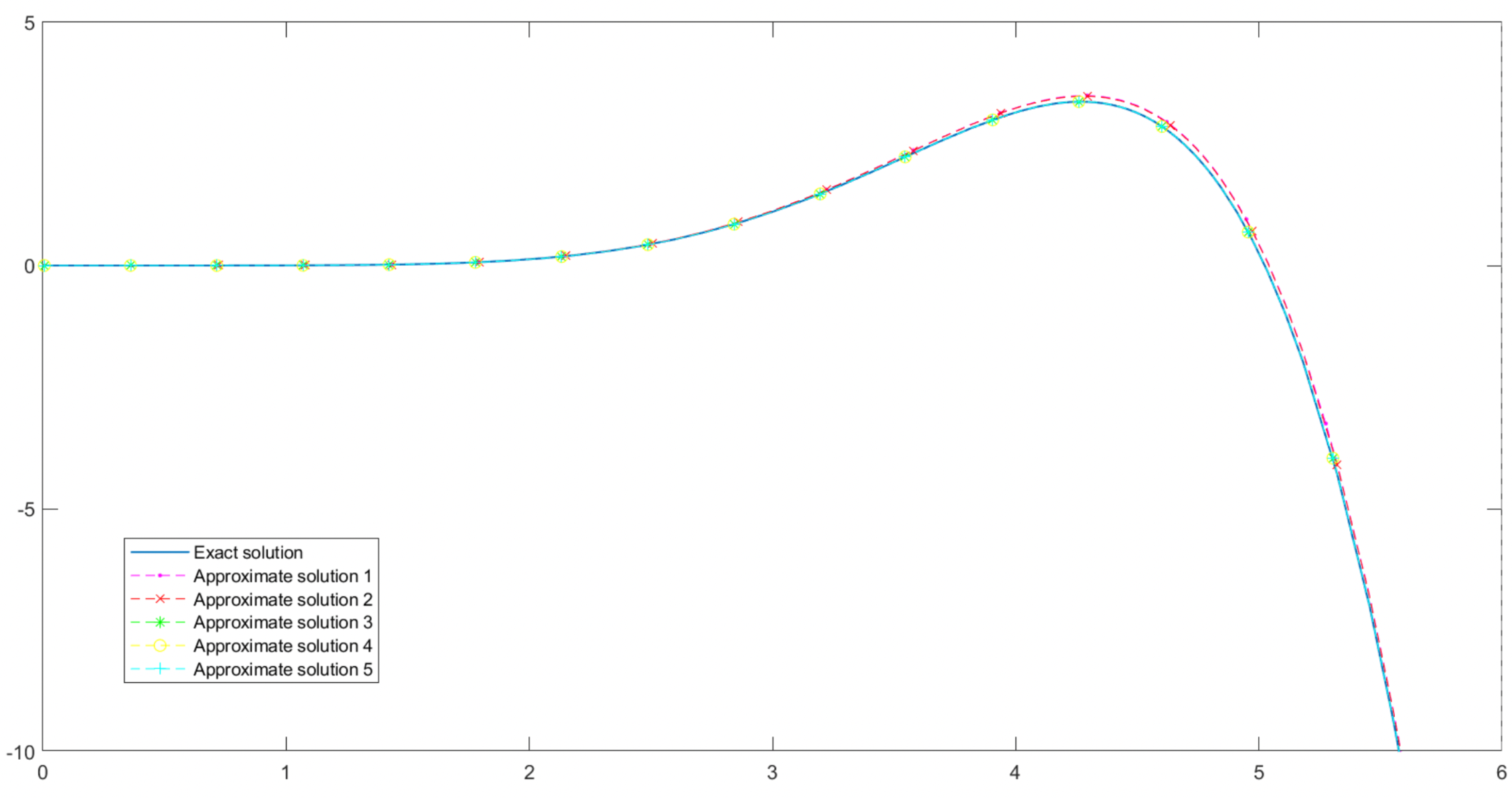}
    \caption{Test $3$. $\left.g(x)=x^{6}(\sin x+\cos x) \text{ any.}\right.$}
    \label{fig:3}
\end{figure}

Figures \ref{fig:1}-\ref{fig:3} show that our method is very accurate with respect to rounding errors and the choice of $g(x)$.\\
A  wider  numerical stability for tests confirms that our approach is quite stable and the results do not diverge for the considered perturbation above  when the number of roots $n$ increases.

\subsection{Timings}
We have checked the growth rate of the running time of our approach on $n$ randomly generated roots ranging between
2 and 50 and $g(x)$ is defined as in  Test 2. Figure \ref{fig:2} shows the running time against the degree in $log$-$log$ scale, with a linear fit and its equation. Roughly speaking, the running time grows as $O(n^\alpha)$, where $\alpha$ is the coefficient of the linear term in the equation, i.e., 1.06 in our case.
\begin{figure}[H]
    \centering
    \captionsetup{justification=centering}
 \includegraphics[scale=0.41]{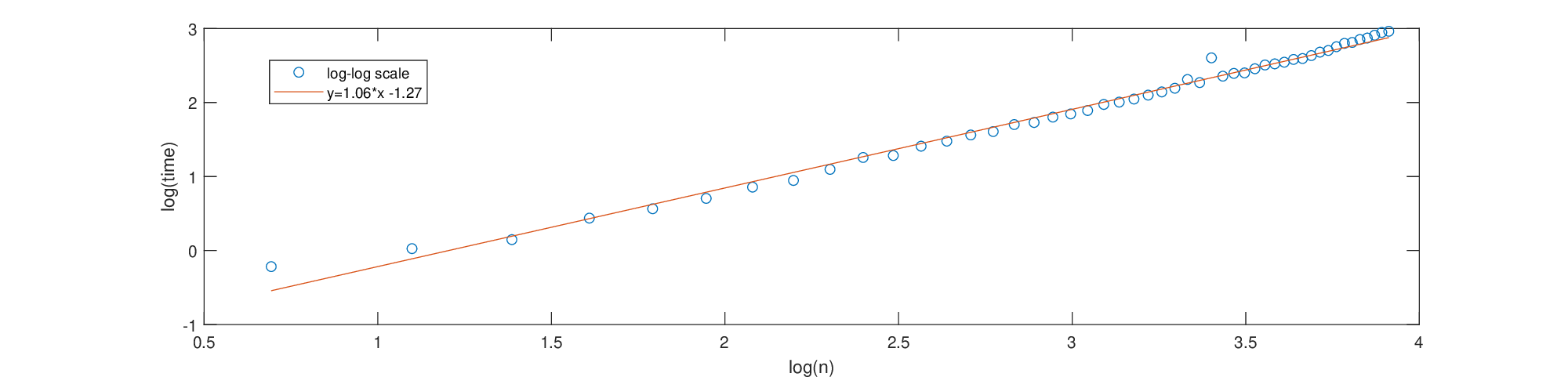}
     \caption{Running time (in seconds)}
    \label{fig:4}
\end{figure}

\section{Concluding remarks}
Cauchy-Euler equations  naturally occur in a large variety of applications including the studies of algorithms, Quicksort, engineering applications and more. The main problem   is to find a general solution for its non-homogeneous form, which  is a key topic in this area, where few results have been    provided   for a restricted particular  equation.
This paper provides a  contribution   to the development of  analytic and numerical methods for solving such an equation. We have introduced a new concept of atoms  defined on   finite subsets of  real numbers  which become   crucial tools to express an analytical particular   solution  for its non-homogeneous form  generated by the set of roots of its characteristic polynomial. This is not always possible to calculate in general. So a numerical procedure must be used.  The proposed  method focuses  on a random   perturbation of  known distinct   roots in order  to get an approximate  solution.\\
This method is able to provide a particular solution  for a given higher-order non-homogeneous Cauchy-Euler equation. The numerical solution is compared with the exact solution and                the results   presented indicate that our method is a powerful mathematical tool to give a particular solution to the non-homogeneous Cauchy-Euler equation. In particular, a numerical  analysis of the stability applied to our tests confirms that our approach is stable and that the above perturbation does not affect the convergence when the number of roots $n$ increases.


\begin{thebibliography}{10}
\bibitem{MS} Assal  Miloud  and Belhaj Skander, A Particular Solution for Higher Order Non-homogeneous Discrete
Cauchy-Euler Equations,  Difference Equations, Discrete Dynamical Systems and Applications, DOI.:	 10.1007/978-3-031-51049-6-19, (2024).
\bibitem {MW} M. Assal, A. Rahmouni, An Improved Hardy's Inequality, Acta Mathematica Scientia, Vol. 33, N. 5, p. 1382-1386 (2013).
 \bibitem {HL} H.-H. Chern, H.-K. Hwang, T.-H. Tsai, An asymptotic theory for Cauchy-Euler differential
equations with applications to the analysis of algorithms, J. Of Algorithms, Vol. 44, p. 177-225 (2002).
\bibitem{12} S. Hinpang, On solution of the second order Euler equation in the distribution space,
Chiang Mai University: Graduate School, Chiang Mai (2001).
\bibitem{10} N. Hongsit, On lattice of the fourth order Euler equation, Chiang Mai University:
Graduate School, Chiang Mai (2001).
\bibitem {LC}L. Qiusheng, C. Hong, L. Guiqing, Static and dynamic analysis of straight bars with variable crosssection, Computer $\&$ Structures Vol. 59, No. 6, p. 1185-1191 (1996).
\bibitem{24} A. Sangsuwan, K. Nonlaopon and S. Orankitjaroen, The generalized solutions of a certain nth order Cauchy-Euler equation, Asian-European Journal of Mathematics, Vol. 13, No. 1 (2020).



\end{thebibliography}
\end{document}